\def\boxit{$\sqcap\kern-8pt\sqcup$}
\def\littbox{\null\hfill\boxit{}}
\title{Ramsey for complete graphs with dropped cliques}
\author[J. Chappelon \and L.P. Montejano \and J.L. Ram\'irez Alfons\'in]{Jonathan Chappelon \and Luis Pedro Montejano \and Jorge Luis Ram\'irez Alfons\'in}
\thanks{The second author was supported by CONACYT}
\address{Universit\'{e} Montpellier 2, Institut de Math\'{e}matiques et de Mod\'{e}lisation de Montpellier, Case Courrier 051, Place Eug\`{e}ne Bataillon, 34095 Montpellier Cedex 05, France.}
\email{jonathan.chappelon@um2.fr}
\email{lpmontejano@gmail.com}
\email{jramirez@um2.fr}
\keywords{Ramsey number, recursive formula}
\subjclass[2010]{05C55, 05D10}
\date{\today}
\theoremstyle{plain}
\newtheorem{theorem}{Theorem}[section]
\newtheorem{lemma}[theorem]{Lemma}
\newtheorem{proposition}[theorem]{Proposition}
\newtheorem{corollary}[theorem]{Corollary}
\theoremstyle{definition}
\newtheorem{conjecture}[theorem]{Conjecture}
\newtheorem{example}[theorem]{Exemple}
\newtheorem{remark}[theorem]{Remark}
\newtheorem{question}[theorem]{Question}
\renewcommand{\le}{\leqslant}
\renewcommand{\ge}{\geqslant}
\g@addto@macro{\endabstract}{\@setabstract}
\newcommand{\authorfootnotes}{\renewcommand\thefootnote{\@fnsymbol\c@footnote}}%
\begin{document}

\begin{center}
\LARGE 
Ramsey for complete graphs with dropped cliques
\par\bigskip\normalsize\authorfootnotes
Jonathan Chappelon\footnote{Corresponding author}\footnote{E-mail address: jonathan.chappelon@um2.fr}\textsuperscript{1}, Luis Pedro Montejano\footnote{E-mail address: lpmontejano@gmail.com}\textsuperscript{1} and Jorge Luis Ram\'{i}rez Alfons\'{i}n\footnote{E-mail address: jramirez@um2.fr}\textsuperscript{1} \par\bigskip \textsuperscript{1}{Universit\'{e} Montpellier 2, Institut de Math\'{e}matiques et de Mod\'{e}lisation de Montpellier, Case Courrier 051, Place Eug\`{e}ne Bataillon, 34095 Montpellier Cedex 05, France} \par\bigskip
December 12, 2014
\end{center}

\begin{abstract}
Let $K_{[k,t]}$ be the complete  graph on $k$ vertices from which a set of edges, induced by a clique of order $t$, has been dropped. In this note we give two explicit upper bounds for  $R(K_{[k_1,t_1]},\dots, K_{[k_r,t_r]})$ (the smallest integer $n$ such that for any  $r$-edge coloring of $K_n$ there always occurs a monochromatic $K_{[k_i,t_i]}$ for some $i$).  Our first upper bound contains a classical one in the case when $k_1=\cdots =k_r$ and $t_i=1$ for all $i$.  The second one is obtained by introducing a new edge coloring called {\em $\chi_r$-colorings}. We finally discuss a conjecture claiming, in particular, that our second upper bound improves the classical one in infinitely many cases.\\[2ex]
\textbf{Keywords:} Ramsey number, recursive formula.\\
\textbf{MSC2010:} 05C55, 05D10.
\end{abstract}


\section{Introduction}

Let $K_n$ be a complete graph and let $r\ge 2$ be an integer. A $r$-edge coloring of a graph is a surjection from $E(G)$ to $\{0,\dots ,r-1\}$ (and thus each color class is not empty).  Let $k\ge t\ge 1$ be positive integers. We denote by $K_{[k,t]}$ the complete  graph on $k$ vertices from which a set of edges, induced by a clique of order $t$, has been dropped, see Figure \ref{fig3}. 

\begin{figure}[htb] 
\includegraphics[width=.4\textwidth]{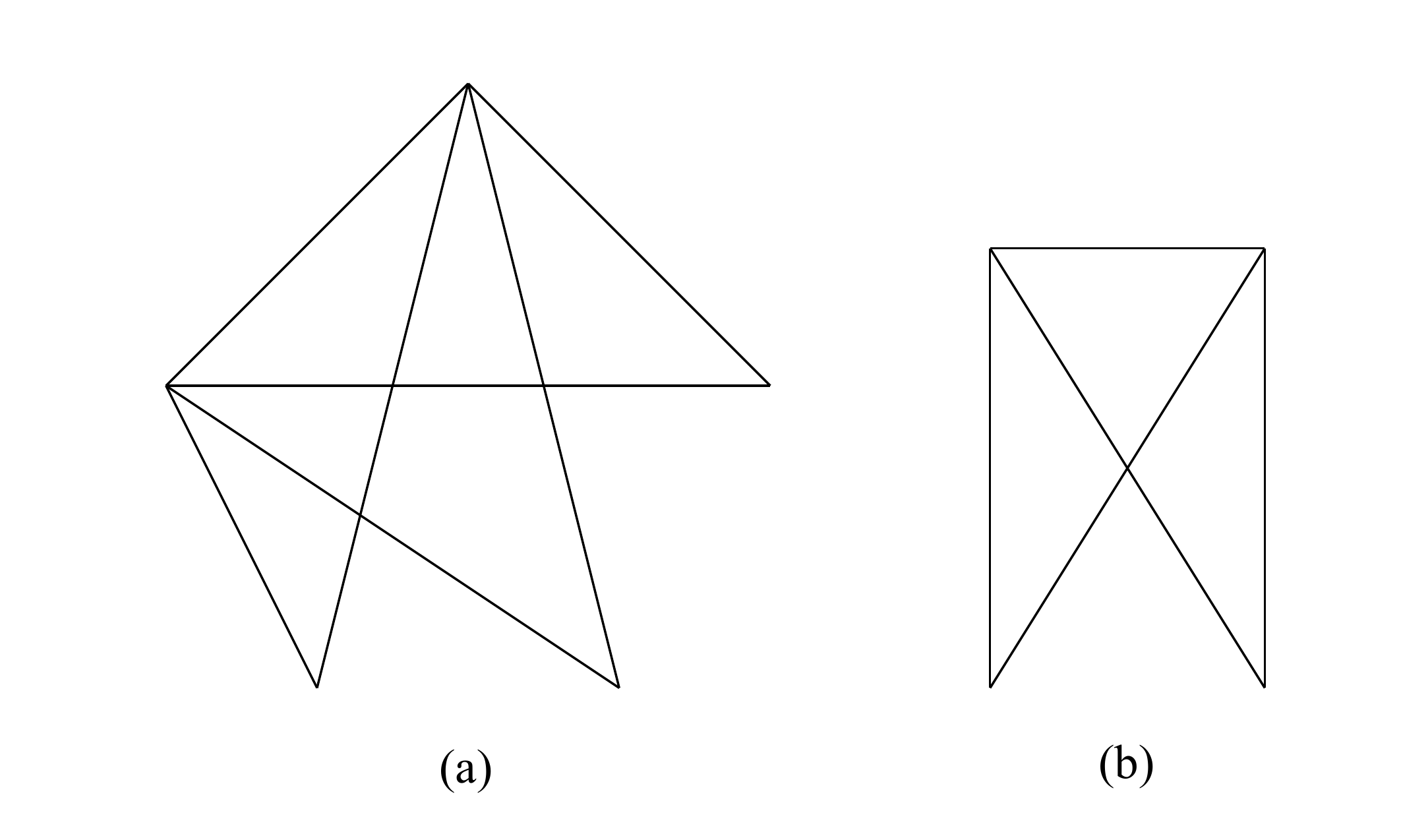}
\caption{(a) $K_{[5,3]}$ and (b) $K_{[4,2]}$}
\label{fig3}\end{figure}

Let $k_1,\dots ,k_r$ and $t_1,\ldots,t_r$ be positive integers with $k_i\ge t_i$ for all $i\in\{1,\ldots,r\}$.  Let $R([k_1,t_1],\dots, [k_r,t_r])$ be the smallest integer $n$ such that for any  $r$-edge coloring of $K_n$ there always occurs a monochromatic $K_{[k_i,t_i]}$ for some $i$.  

In the case when  $k_i=t_i$ for some $i$, we set 
$$
R([k_1,t_1],\dots, [k_{i-1},t_{i-1}],[t_i,t_i],[k_{i+1},t_{i+1}],\dots , [k_r,t_r])\le t_i.
$$ 
We note that equality is reached at $\min\limits_{1\le i\le r}\{t_i | t_i=k_i\}$.
Since the set of all the edges of $K_{[t_i,t_i]}$ (which is empty) can always be colored with color~$i$. We also notice that the case $R([k_1,1],\dots, [k_r,1])$ is exactly the classical Ramsey number $r(k_1,\dots ,k_r)$ (the smallest integer $n$ such that for any $r$-edge coloring of $K_n$ there always occurs a monochromatic $K_{k_i}$ for some $i$).  We refer the reader to the excellent survey \cite{Rad} on Ramsey numbers for small values.
\smallskip

In this note, we investigate general upper bounds for $R([k_1,t_1],\dots, [k_r,t_r])$.  In the next section we present a recursive formula that yields to an explicit general upper bound (Theorem \ref{mainth}) generalizing the well-known explicit upper bound due to Graham and R\"odl  \cite{GR} (see equation \ref{eq2}). We also improve our explicit upper bound  when $r=2$ for certain values of $k_i,t_i$ (Theorems~\ref{th2} and \ref{th3}).
\smallskip

In Section \ref{xi}, we shall present another general explicit upper bound for $R([k_1,t_1],\dots, [k_r,t_r])$ (Theorem \ref{mainth2}) by introducing a new edge coloring called {\em $\chi_r$-colorings}. We end by discussing a conjecture that
is supported by graphical and numerical results. 

\section{Upper bounds}

The following recursive inequality is classical in Ramsey theory
\begin{eqnarray}\label{eeqq}
r(k_1,k_2,\dots ,k_r) & \le r(k_1-1,k_2,\dots ,k_r)+r(k_1,k_2-1,\dots ,k_r)+\cdots +\\
& +r(k_1,k_2,\dots ,k_r-1)-(r-2) \nonumber
\end{eqnarray}

In the same spirit, we have the following.

\begin{lemma}\label{recur} Let $r\ge 2$ and let $k_1,\dots ,k_r$ and $t_1,\ldots,t_r$ be positive integers with $k_i\ge t_i+1\ge 2$ for all $i$. Then,
$$\begin{array}{ll}
R([k_1,t_1],\dots, [k_r,t_r]) & \le \ R([k_1-1,t_1],[k_2,t_2],\dots, [k_r,t_r])\\
& \ \ \ + R([k_1,t_1],[k_2-1,t_2],\dots, [k_r,t_r])\\
& \ \ \ \ \ \ \ \ \ \ \ \ \ \  \vdots \\
& \ \ \ + R([k_1,t_1],[k_2,t_2],\dots, [k_r-1,t_r])-(r-2).
\end{array}$$
\end{lemma}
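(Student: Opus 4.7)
The plan is to mimic the classical proof of inequality (\ref{eeqq}) for Ramsey numbers, with a small twist at the very end to handle the dropped clique. Set
$$N_i \;=\; R([k_1,t_1],\dots,[k_{i-1},t_{i-1}],[k_i-1,t_i],[k_{i+1},t_{i+1}],\dots,[k_r,t_r])$$
for each $i\in\{1,\dots,r\}$ (this is well-defined because $k_i-1\geq t_i$), and let
$$N \;=\; N_1+N_2+\cdots+N_r-(r-2).$$
I would take an arbitrary $r$-edge coloring of $K_N$ and, fixing a vertex $v$, look at how its $N-1=\sum N_i-(r-1)$ incident edges are distributed among the $r$ colors. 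A pigeonhole count shows that some color $i$ must appear on at least $N_i$ of these edges: otherwise the total number of edges at $v$ would be at most $\sum_i (N_i-1)=\sum N_i-r$, contradicting $N-1=\sum N_i-(r-1)$.

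Next, I would let $W$ be the set of those $\geq N_i$ neighbors of $v$ joined to $v$ by color $i$. Restricting the coloring of $K_N$ to the clique on $W$ produces a coloring with at most $r$ colors (possibly not surjective, but this is harmless: if a color $j$ is absent on $W$ we may just ignore that color, and the corresponding monochromatic target is only easier to avoid). By the definition of $N_i$, one of two things must happen inside $W$: either there is a monochromatic $K_{[k_j,t_j]}$ in some color $j\neq i$ — in which case we are already done — or else there is a monochromatic $K_{[k_i-1,t_i]}$ in color $i$.

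The concluding step, which is the only place where the ``dropped clique'' structure enters in a nontrivial way, is to verify that in the second case we can add the vertex $v$ to obtain a monochromatic $K_{[k_i,t_i]}$. Indeed, let $U\subseteq W$ with $|U|=k_i-1$ be the vertex set of this $K_{[k_i-1,t_i]}$ in color $i$, and let $T\subseteq U$ with $|T|=t_i$ be the $t_i$-clique of missing edges. All edges from $v$ to $U$ are of color $i$ by the choice of $W$, so the graph of color-$i$ edges induced on $U\cup\{v\}$ is a $K_{k_i}$ minus exactly the clique on $T$, that is, a copy of $K_{[k_i,t_i]}$ (with $v$ lying outside the removed clique). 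This finishes the argument.

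The only genuine subtlety I anticipate is the bookkeeping with the surjectivity convention used for $r$-colorings in this paper and the case where some color is missing on $W$; otherwise the proof is a direct adaptation of the classical Ramsey recursion, with the key new observation being that $K_{[k_i-1,t_i]}+v\cong K_{[k_i,t_i]}$ when $v$ is connected to every vertex of $K_{[k_i-1,t_i]}$ by edges of the same color.
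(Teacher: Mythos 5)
Your proposal is correct and follows essentially the same route as the paper's proof: fix a vertex $v$, use the pigeonhole count on the color classes of edges at $v$ to find a color $i$ with at least $N_i$ neighbors in that color, and then either find a monochromatic $K_{[k_j,t_j]}$ for $j\neq i$ inside that neighborhood or extend a monochromatic $K_{[k_i-1,t_i]}$ by $v$. Your explicit verification that $v$ can be added outside the removed $t_i$-clique is a welcome detail the paper leaves implicit.
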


A similar recursive inequality has been treated in \cite{SZ} in a more general setting (by considering 
a  family of graphs  intrinsically constructed via two operations {\em disjoin unions} and {\em joins}, see also \cite{HZ} for the case $r=2$). Although the latter could be used to obtain Lemma \ref{recur}, the arguments used here give a different and a more straight forward proof.
\smallskip

{\em Proof of Lemma \ref{recur}.} Let us take any $r$-edge coloring of $K_{N}$ with 
$$N\ge R([k_1-1,t_1],[k_2,t_2],\dots, [k_r,t_r])+\cdots + R([k_1,t_1],[k_2,t_2],\dots, [k_r-1,t_r])-(r-2).$$
Let $v$ a vertex of $K_N$ and let $\Gamma_i(v)$ be the set of all vertices joined to $v$ by an edge having color $i$ for each $i=1,\dots ,r$.
We claim that there exists index $1\le i\le r$ such that 
$$\Gamma_i(v)\ge R([k_1,t_1],\dots, [k_i-1,t_{i}],\dots, [k_r,t_r]).$$

Otherwise, 
$$\begin{array}{ll}
N-1=d(v)=\sum\limits_{j=1}^r\Gamma_j(v) & \le \sum\limits_{j=1}^r (R([k_1,{t_1}],\dots, [k_i-1,t_{i}],\dots, [k_r,t_r]) -1)\\
&  = \sum\limits_{j=1}^r (R([k_1,{t_1}],\dots, [k_i-1,t_{i}],\dots, [k_r,t_r])-r\\
& \le N + (r-2)-r= N-2
\end{array}$$
which is a contradiction. 
\medskip

Now, suppose that $\Gamma_i(v)\ge R([k_1,{t_1}],\dots, [k_i-1,t_{i}],\dots, [k_r,t_r])$ for an index $i$. By definition of 
$R([k_1,{t_1}],\dots, [k_i-1,t_{i}],\dots, [k_r,t_r])$ we have that the complete graph induced by $\Gamma_i(v)$ contains either 
a subset of vertices inducing a copy $K_{[k_j,t_j]}$ having all edges with color $j$, for some $j\neq i$, and we are done  or a subset of vertices inducing $K_{[k_i-1,t_i]}$ having all edges with color $i$. Adding vertex $v$ to $K_{[k_i-1,t_i]}$
we obtain the desired copy of $K_{[k_i,t_i]}$ having all edges colored with color  $i$.
\littbox

\subsection{Explicit general upper bound}
Lemma \ref{recur}  yield us to the following general upper bound for $R([k_1,t_1],\dots, [k_r,t_r])$. The latter was not treated in \cite{SZ} at all (in fact, suitable values/bounds needed to upper bound  the recursion given in \cite{SZ} for $R([k_1,t_1],\dots, [k_r,t_r])$ seem to be very difficult to estimate). 

\begin{theorem}\label{mainth}
Let $r\ge 2$ be a positive integer and let $k_1,\ldots,k_r$ and $t_1,\ldots,t_r$ be positive integers such that $k_i\ge t_i$ for all $i\in\{1,\ldots,r\}$. Then,
$$
R\left( [k_1,t_1] , \ldots , [k_r,t_r] \right) \le \max_{1\le i\le r}\{t_i\}\binom{k_1+\cdots+k_r-(t_1+\cdots+t_r)}{k_1-t_1,k_2-t_2,\ldots\ldots,k_r-t_r}
$$
where $\binom{n_1+n_2+\cdots+n_r}{n_1,n_2,\ldots\ldots,n_r}$ is the multinomial  coefficient defined by $\binom{n_1+n_2+\cdots+n_r}{n_1,n_2,\ldots\ldots,n_r}=\frac{(n_1+\cdots+n_r)!}{n_1!n_2!\cdots n_r!}$, for all nonnegative integers $n_1,\ldots,n_r$.
\end{theorem}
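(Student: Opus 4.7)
The plan is to prove the bound by induction on the integer $s := (k_1-t_1) + \cdots + (k_r-t_r)$. Setting $s_i := k_i - t_i$, the key observation is that the multinomial coefficient on the right-hand side satisfies the Pascal-type identity
$$\binom{s}{s_1,\ldots,s_r} \;=\; \sum_{i\,:\, s_i \ge 1} \binom{s-1}{s_1,\ldots,s_i-1,\ldots,s_r},$$
which is precisely the combinatorial shadow of the recursion in Lemma \ref{recur}.

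For the base cases I would handle every situation in which some $s_i = 0$, i.e.\ $k_i = t_i$. The paper has already recorded that in this case $R([k_1,t_1],\ldots,[k_r,t_r]) \le t_i \le \max_j\{t_j\}$. Since the multinomial coefficient is always at least $1$, this bound is dominated by $\max_j\{t_j\}\binom{s}{s_1,\ldots,s_r}$, so the statement holds whenever any $s_i$ vanishes, in particular when $s = 0$.

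For the inductive step I would assume $s \ge 1$ and $s_i \ge 1$ for every $i$, so that Lemma \ref{recur} applies and gives
$$R([k_1,t_1],\ldots,[k_r,t_r]) \;\le\; \sum_{i=1}^{r} R([k_1,t_1],\ldots,[k_i-1,t_i],\ldots,[k_r,t_r]) \;-\; (r-2).$$
Each summand on the right has parameter sum $s-1$, so the induction hypothesis applies term by term; after pulling the common factor $\max_j\{t_j\}$ out and invoking the Pascal-type identity above, the sum collapses to $\max_j\{t_j\}\binom{s}{s_1,\ldots,s_r}$, and the remaining $-(r-2) \le 0$ can simply be discarded.

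I do not expect a serious obstacle: the graph-theoretic content is carried entirely by Lemma \ref{recur}, and the rest is the routine multinomial analogue of the classical deduction of $r(k_1,\ldots,k_r) \le \binom{(\sum k_j) - r}{k_1-1,\ldots,k_r-1}$ from the recursion (\ref{eeqq}). The only mild care point is the boundary bookkeeping: one must invoke the $s_i = 0$ base case on every coordinate axis, since Lemma \ref{recur} itself requires $k_i \ge t_i + 1$ in every coordinate and therefore cannot be pushed onto those axes by the recursion alone.
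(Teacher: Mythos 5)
Your proposal is correct and follows essentially the same route as the paper: induction driven by Lemma \ref{recur}, base cases handled by the degenerate values $R(\ldots,[t_j,t_j],\ldots)\le t_j$ together with the fact that the multinomial coefficient is at least $1$, and the inductive step closed by the Pascal-type multinomial identity after discarding $-(r-2)\le 0$. The only cosmetic difference is that you induct on $\sum_i(k_i-t_i)$ while the paper inducts on $\sum_i k_i$ with the $t_i$ fixed, which is the same induction.
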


\begin{proof}
We suppose that $t_1,\ldots,t_r$ are fixed. We proceed by induction on $k_1+\cdots+k_r$, using Lemma~\ref{recur}. In the case where $k_j=t_j$, for some $j\in\{1,\ldots,r\}$, we already know that
$$
R\left( [k_1,t_1] , \ldots , [k_{j-1},t_{j-1}] , [t_j,t_j] , [k_{j+1},t_{j+1}] , \ldots , [k_r,t_r] \right) = t_j,
$$
and, since $k_i-t_i\ge 0$ for all $i$,
$$
\binom{k_1+\cdots+k_{i-1}+k_{i+1}+\cdots+k_r-(t_1+\cdots+t_{i-1}+t_{i+1}+\cdots+t_r)}{k_1-t_1,\ldots,k_{j-1}-t_{j-1},0,k_{j+1}-t_{j+1}\ldots\ldots,k_r-t_r} \ge 1.
$$
Therefore
$$
R\left( [k_1,t_1] , \ldots , [k_r,t_r] \right) = t_j \le \max_{1\le i\le r}{t_i}\binom{k_1+\cdots+k_r-(t_1+\cdots+t_r)}{k_1-t_1,k_2-t_2,\ldots\ldots,k_r-t_r}
$$
in this case. Now, suppose that $k_i>t_i$ for all $i\in\{1,\ldots,r\}$. By Lemma~\ref{recur} and by induction hypothesis, we obtain that
$$\begin{array}{ll}
R([k_1,t_1],\dots, [k_r,t_r]) & \le \ R([k_1-1,t_1],[k_2,t_2],\dots, [k_r,t_r])\\
& \ \ \ + R([k_1,t_1],[k_2-1,t_2],\dots, [k_r,t_r])\\
& \ \ \ \ \ \ \ \ \ \ \ \ \  \vdots \\
& \ \ \ + R([k_1,t_1],[k_2,t_2],\dots, [k_r-1,t_r])-(r-2) \\ \ \\
& \le \begin{array}[t]{l}
\displaystyle\max_{1\le i\le r}{t_i} \left( \binom{k_1+\cdots+k_r-(t_1+\cdots+t_r)-1}{k_1-t_1-1,k_2-t_2,\ldots\ldots,k_r-t_r} \right. \\ \ \\
+ \displaystyle\binom{k_1+\cdots+k_r-(t_1+\cdots+t_r)-1}{k_1-t_1-1,k_2-t_2-1,\ldots\ldots,k_r-t_r} \\ \ \\
\quad\quad\quad\vdots \\ \ \\
+ \left.\displaystyle\binom{k_1+\cdots+k_r-(t_1+\cdots+t_r)-1}{k_1-t_1-1,k_2-t_2,\ldots\ldots,k_r-t_r-1} \right) - (r-2)\\
\end{array} \\ \ \\
& \le \displaystyle\max_{1\le i\le r}{t_i}\binom{k_1+\cdots+k_r-(t_1+\cdots+t_r)}{k_1-t_1,k_2-t_2,\ldots\ldots,k_r-t_r},
\end{array}$$
since we have the following multinomial identity
$$
\binom{n_1+n_2+\cdots+n_r}{n_1,n_2,\ldots\ldots,n_r} = \sum_{i=1}^{r}\binom{n_1+n_2+\cdots+n_r-1}{n_1,\ldots,n_{i-1},n_i-1,n_{i+1},\ldots,n_r}
$$
for all positive integers $n_1,n_2,\ldots,n_r$.
\end{proof}

Theorem \ref{mainth} is a natural generalization of the only known explicit upper bound for classical Ramsey numbers. Indeed,
an immediate consequence of the above theorem (when $t=1$) is the following classical upper bound due to Graham and R\"odl  \cite[(2.48)]{GR} that was obtained by using \eqref{eeqq}.

\begin{equation}\label{eq2a}
R\left( [k_1,1] , \ldots , [k_r,1] \right) \le {{k_1+\cdots +k_r-r}\choose{k_1-1,\dots,k_r-1}}\cdot
\end{equation}

Let $R_r([k,t])=R(\underbrace{[k,t],\dots, [k,t]}_r)$.

\begin{corollary}\label{cor1a} Let $k\ge t\ge 2$ and $r\ge 2$ be integers. Then,
$$
R_r([k,t])\le t \binom{r(k-t)}{k-t,\ldots,k-t}.
$$
\end{corollary}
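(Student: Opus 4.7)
The plan is to obtain this as an immediate specialization of Theorem~\ref{mainth}, since the statement is simply that theorem applied with all the $k_i$ equal to $k$ and all the $t_i$ equal to $t$. So the only thing to do is substitute and simplify the right-hand side.

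Concretely, I would start from
$$
R([k_1,t_1],\ldots,[k_r,t_r]) \le \max_{1\le i\le r}\{t_i\}\binom{k_1+\cdots+k_r-(t_1+\cdots+t_r)}{k_1-t_1,\ldots,k_r-t_r},
$$
which is valid as long as $k_i\ge t_i$ for all $i$, a hypothesis that is satisfied here since $k\ge t$. I would then set $k_i=k$ and $t_i=t$ for every $i\in\{1,\ldots,r\}$. The prefactor $\max_i t_i$ collapses to $t$, the top of the multinomial coefficient becomes $rk-rt=r(k-t)$, and each of the $r$ lower entries becomes $k-t$. This yields exactly the bound
$$
R_r([k,t]) \le t\binom{r(k-t)}{k-t,\ldots,k-t},
$$
as claimed.

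Since this is purely an instantiation, there is no real obstacle: no new induction, no new recursion, and no new combinatorial identity is required beyond what already appears in the proof of Theorem~\ref{mainth}. The only small point worth mentioning is that the hypothesis $t\ge 2$ in the corollary is not needed for the inequality itself (Theorem~\ref{mainth} accommodates $t=1$ as well, giving back the Graham--R\"odl bound \eqref{eq2a}); it is presumably included because the case $t=1$ is already the classical Ramsey setting and the interest of the corollary lies in the genuine ``dropped clique'' range $t\ge 2$.
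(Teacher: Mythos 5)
Your proposal is correct and matches the paper exactly: the corollary is stated there as an immediate specialization of Theorem~\ref{mainth} with $k_i=k$ and $t_i=t$ for all $i$, and no further argument is given or needed. Your remark that the hypothesis $t\ge 2$ is not logically required is also accurate.
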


An immediate consequence of the above corollary (again when $t=1$) is the following upper bound
\begin{equation}\label{eq2}
R_r([k,1])\le \frac{(rk-r)!}{((k-1)!)^r}\cdot
\end{equation}

\subsection{Case $r=2$}
When $r=2$, it is the exact values of the recursive sequence generated from $u_{t,k} = u_{k,t} = t (= R_2([t,t]))$ for all $k\ge t$ and following the recursive identity $u_{k_1,k_2} = u_{k_1-1,k_2} + u_{k_1,k_2-1}$ for all $k_1,k_2\ge t+1$.
\smallskip

We investigate with more detail the cases $R([s,2],[t,2])$ (resp. $R([s,2],[t,1])$), that is, the smallest integer $n$ such that for any  $2$-edge coloring of $K_n$ there always occurs a  monochromatic $K_s-\{e\}$ or $K_t-\{e\}$ (resp. a  monochromatic $K_s-\{e\}$ or $K_t$)). These cases have been extensively studied and values/bounds for specific $s$ and $t$ are known, see Table \ref{current1} obtained from \cite{Rad}.

\begin{center}
\begin{table}[h] 
\centering
\footnotesize
\noindent\makebox[\textwidth]{%
\begin{tabular}{|c|c|c|c|c|c|c|c|c|c|}
 \hline
 & $K_3\setminus \{e\}$ & $K_4\setminus \{e\}$ &$K_5\setminus \{e\}$ &$K_6\setminus \{e\}$ &$K_7\setminus \{e\}$ &$K_8\setminus \{e\}$ &$K_9\setminus \{e\}$ &$K_{10}\setminus \{e\}$ &$K_{11}\setminus \{e\}$ \\
\hline
$K_3\setminus \{e\}$ & 3 & 5 & 7 & 9 & 11 & 13 & 15 & 17 & 19\\
\hline
$K_4\setminus \{e\}$ & 5 & 10 & 13 & 17 & 28 & [29,38] & 34 & 41 & \\
\hline
$K_5\setminus \{e\}$ & 7 & 13 & 22 & [31,39] & [40,66] & & & & \\
\hline
$K_6\setminus \{e\}$ & 9& 17 & [31,39] & [45,70] & [59,135] & & & &\\ 
\hline
$K_7\setminus \{e\}$ & 13 & 28 & [40,66] & [59,135] & 251 & & & & \\ 
\hline
$K_3$ & 5 & 7 & 11 & 17 & 21 & 25 & 31 & 37 & [42,45]\\
\hline
$K_4$ & 7 & 11 & 19 & [30,33] & [37,52] & 75 & 105 & 139 & 184\\
\hline
$K_5$ & 9 & 16 & [30,34] & [43,67] & 112 & 183 &277 &409 &581 \\
\hline
$K_6$ & 11 & 21 & [37,53] & 110 & 205 &373 &621 &1007 &1544 \\ 
\hline
$K_7$ & 13 & [28,30] & [51,83] & 193 & 392 &753 &1336 &2303 &3751 \\ 
\hline
$K_8$ & 15 & 42 & 123 & 300 & 657 &1349 &2558 &4722 &8200 \\ 
\hline          
\end{tabular}}
\vspace{.2cm}
\caption{\label{current1}Known bounds and values of $R([s,2],[t,2])$ and $R([s,2],[t,1])$.}
\end{table}
\end{center}

Lemma \ref{recur} allows to give (old) and new upper bounds for infinitely many cases.

\begin{theorem}\label{th2}
\begin{enumerate}[(a)]
\item[]
\item\cite[3.1 (a)]{Rad}
$R([3,2],[k,2])=2k-3$ for all $k\ge 2$,
\item
$R([4,2],[k,2])\le k^2-2k-39$ for all $k\ge 10$,
\item
$R([5,2],[8,2])\le 104$ and $R([5,2],[k,2])\le \frac{1}{3}k^3 - \frac{1}{2}k^2 - \frac{239}{6}k + 294$ for all $k\ge 9$,
\item
$R([6,2],[k,2])\le \frac{1}{12}k^4 - \frac{241}{12}k^2 + 274k - 1009$ for all $k\ge 8$,
\item
$R([7,2],[k,2])\le \frac{1}{60}k^5 + \frac{1}{24}k^4 - \frac{20}{3}k^3 + \frac{3047}{24}k^2 - \frac{17507}{20}k + 2064$ for all $k\ge 7$.
\end{enumerate}
\end{theorem}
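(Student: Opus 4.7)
Part (a) is cited from \cite[3.1(a)]{Rad}. The remaining parts (b)--(e) are all treated by the same strategy: Lemma~\ref{recur} specialized to $r=2$ gives the binary recursion
$$
R([s,2],[k,2]) \le R([s-1,2],[k,2]) + R([s,2],[k-1,2]),
$$
and iterating in the second index from some starting value $k_0$ yields the telescoping upper bound
$$
R([s,2],[k,2]) \le R([s,2],[k_0,2]) + \sum_{j=k_0+1}^{k} R([s-1,2],[j,2]).
$$
My plan is to feed part (a) into (b), then (b) into (c), then (c) into (d), then (d) into (e), picking the base value $R([s,2],[k_0,2])$ from Table~\ref{current1}, directly when the entry is exact, or via one extra application of Lemma~\ref{recur} when only upper bounds are tabulated.

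For (b), I would take $k_0=10$ with the exact value $R([4,2],[10,2])=41$ and substitute $R([3,2],[j,2])\le 2j-3$ from part (a). The sum $\sum_{j=11}^{k}(2j-3)=(k-10)(k+8)$ collapses at once, and adding $41$ yields exactly $k^2-2k-39$. For (c), the base case $R([5,2],[8,2])\le 104$ is obtained in one step from Lemma~\ref{recur} together with the table entries $R([4,2],[8,2])\le 38$ and $R([5,2],[7,2])\le 66$. A further application gives $R([5,2],[9,2])\le R([4,2],[9,2])+R([5,2],[8,2])\le 34+104=138$, which matches the claimed cubic at $k=9$; the remaining range $k\ge 10$ then follows from summing the bound of (b) over $j\ge 10$. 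The same pattern handles (d), with base case $R([6,2],[8,2])\le R([5,2],[8,2])+R([6,2],[7,2])\le 104+135=239$ and summing the cubic from (c) for $j\ge 9$, and (e), with base case $R([7,2],[7,2])=251$ and summing the quartic from (d) for $j\ge 8$.

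The only real work is bookkeeping: unpacking $\sum_{j=k_0+1}^{k} j^m$ for $m\le 4$ via the standard Faulhaber formulas and recombining the pieces, together with a short check at each step that the polynomial produced actually dominates the base case (this is automatic once the leading constant is right, but it is where arithmetic slips are most likely). I expect no conceptual obstacle beyond these calculations; the coefficients in (b)--(e) are then forced by the summation.
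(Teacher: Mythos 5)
Your proposal matches the paper's proof essentially step for step: the same telescoping of Lemma~\ref{recur} in the second index, the same base cases ($R([4,2],[10,2])\le 41$, $R([5,2],[7,2])\le 66$, $R([6,2],[7,2])\le 135$, $R([7,2],[7,2])\le 251$) drawn from the table and the cited references, and the same cascade feeding each part's polynomial into the next. The only quibble is that several of the tabulated base values (e.g.\ $41$ and $251$) are known only as upper bounds rather than exact values, which is all the argument needs anyway.
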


\begin{proof}
\begin{enumerate}[(a)]
\item[]
\item
The result is obvious for $k=2$. First, let us show that $R([3,2],[3,2])=3$. For, we notice that $K_{[3,2]}$ is the graph consisting of three vertices, one of degree 2 and two of degree 1, and so $R([3,2],[3,2])>2$. Now, for any 2-coloring of the edges of $K_3$ there is always a vertex with two incident edges with the same color, giving the desired $K_{[3,2]}$.

Suppose now that $k\ge4$. We first prove that $R([3,2],[k,2])\le 2k-3$. For, we iterate inequality of Lemma \ref{recur} obtaining
$$\begin{array}{ll}
R([3,2],[k,2]) & \le R([2,2],[k,2]) + R([3,2],[k-1,2])\\
& =2 +R([3,2],[k-1,2]) \\
& \le 2+ R([2,2],[k-1,2]) + R([3,2],[k-2,2])\\
& = 2+ 2 + R([3,2],[k-2,2])\\
& \le \cdots \le \underbrace{2+\cdots +2}_{k-3}+R([3,2],[3,2])\\
& =2(k-3)+3=2k-3.
\end{array}$$

Now, we show that $R([3,2],[k,2])>2k-4$. For, take a perfect matching of $K_{2(k-2)}$. We color the edges belonging to the matching in red and all others in blue. We have neither a red $K_{[3,2]}$ red (since there are not vertex with two incident edges in red) nor  a blue $K_{[k,2]}$ since any subset of $k$ vertices forces to have at least two red edges.
\item[]
\item
It is known \cite{CH} that $R([4,2],[10,2])\le41$.  By using the latter and the recurrence of Lemma \ref{recur}, we obtain
$$
\begin{array}{ll}
R([4,2],[k,2]) & \le \sum\limits_{i=11}^k R([3,2],[i,2]) + R([4,2],[10,2])\\
& \le \sum\limits_{i=11}^k (2i-3) + 41= k^2-2k-39,
\end{array}
$$
for all integers $k\ge 11$.
\item
It is known \cite{HZ} that $R([5,2],[7,2])\le 66$. By using the latter and the recurrence of Lemma \ref{recur}, we obtain
$$
R([5,2],[8,2]) \le R([4,2],[8,2]) + R([5,2],[7,2]) \le 38 + 66 = 104,
$$
$$
R([5,2],[9,2]) \le R([4,2],[9,2]) + R([5,2],[8,2]) \le 34 + 104 = 138,
$$
and, for all integers $k\ge 10$,
$$
R([5,2],[k,2]) \begin{array}[t]{l}
\le \displaystyle\sum_{i=10}^{k}R([4,2],[i,2]) + R([5,2],[9,2]) \\
\le \displaystyle\sum_{i=10}^{k}(i^2-2i-39) + 138 \\ \ \\
= \displaystyle\frac{1}{3}k^3 - \frac{1}{2}k^2 - \frac{239}{6}k + 294.
\end{array}
$$
\item
It is known \cite{HZ} that $R([6,2],[7,2])\le 135$. By using the latter and the recurrence of Lemma \ref{recur}, we obtain
$$
R([6,2],[8,2]) \le R([5,2],[8,2]) + R([6,2],[7,2]) \le 104 + 135 = 239,
$$
and, for all integers $k\ge 9$,
$$
R([6,2],[k,2]) \begin{array}[t]{l}
\le \displaystyle\sum_{i=9}^{k}R([5,2],[i,2]) + R([6,2],[8,2]) \\
\le \displaystyle\sum_{i=9}^{k}\left(\frac{1}{3}i^3 - \frac{1}{2}i^2 - \frac{239}{6}i + 294\right) + 239 \\ \ \\
= \displaystyle\frac{1}{12}k^4 - \frac{241}{12}k^2 + 274k - 1009.
\end{array}
$$
\item
It is known \cite{SZ2} that $R([7,2],[7,2])\le 251$. By using the latter and the recurrence of Lemma \ref{recur}, we obtain for all integers $k\ge 8$,
$$
R([7,2],[k,2]) \begin{array}[t]{l}
\le \displaystyle\sum_{i=8}^{k}R([6,2],[i,2]) + R([7,2],[7,2]) \\
\le \displaystyle\sum_{i=8}^{k}\left(\frac{1}{12}i^4 - \frac{241}{12}i^2 + 274i - 1009\right) + 251 \\ \ \\
= \displaystyle\frac{1}{60}k^5 + \frac{1}{24}k^4 - \frac{20}{3}k^3 + \frac{3047}{24}k^2 - \frac{17507}{20}k + 2064.
\end{array}
$$
\end{enumerate}
\end{proof}

\begin{theorem}\label{th3}
\begin{enumerate}[(a)]
\item[]
\item
$R([3,2],[k,1])=2k-1$ for all $k\ge 2$,
\item
$R([4,2],[k,1])\le k^2-22$ for all $k\ge 8$,
\item
$R([5,2],[k,1])\le \frac{1}{3}k^3 + \frac{1}{2}k^2 - \frac{131}{6}k + 95$ for all $k\ge 8$,
\item
$R([6,2],[k,1])\le \frac{1}{12}k^4 + \frac{1}{3}k^3 - \frac{127}{12}k^2 + \frac{505}{6}k - 208$ for all $k\ge 8$,
\item
$R([7,2],[k,1])\le \frac{1}{60}k^5 + \frac{1}{8}k^4 - \frac{10}{3}k^3 + \frac{295}{8}k^2 - \frac{10061}{60}k + 287$ for all $k\ge 8$,
\item
$R([8,2],[k,1])\le \frac{1}{360}k^6 + \frac{1}{30}k^5 - \frac{55}{72}k^4 + \frac{32}{3}k^3 - \frac{11923}{180}k^2 + \frac{2093}{10}k - 239$ for all $k\ge 8$,
\item
$R([9,2],[k,1])\le \frac{1}{2520}k^7 + \frac{1}{144}k^6 - \frac{97}{720}k^5 + \frac{331}{144}k^4 - \frac{12241}{720}k^3 + \frac{2671}{36}k^2 - \frac{20351}{140}k + 24$ for all $k\ge 8$,
\item
$R([10,2],[k,1])\le \frac{1}{20160}k^8 + \frac{1}{840}k^7 - \frac{3}{160}k^6 + \frac{19}{48}k^5 - \frac{3031}{960}k^4 + \frac{4079}{240}k^3 - \frac{200713}{5040}k^2 - \frac{1019}{28}k + 408$ for all $k\ge 8$,
\item
$R([11,2],[k,1])\le \frac{1}{181440}k^9 + \frac{1}{5760}k^8 - \frac{31}{15120}k^7 + \frac{11}{192}k^6 - \frac{3827}{8640}k^5 + \frac{5443}{1920}k^4 - \frac{528539}{90720}k^3 - \frac{9761}{288}k^2 + \frac{965843}{2520}k - 1183$ for all $k\ge 8$.
\end{enumerate}
\end{theorem}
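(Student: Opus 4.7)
The plan is to mirror the proof of Theorem~\ref{th2} throughout, using Lemma~\ref{recur} in the form
\[
R([s,2],[k,1]) \le R([s-1,2],[k,1]) + R([s,2],[k-1,1])
\]
and anchoring each iteration at a known base value read from Table~\ref{current1}. The nine statements then follow by a double induction: $s$ is the outer variable (part (a) feeds part (b), part (b) feeds part (c), and so on), with the inner loop iterating $k$ downward to the anchor.

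For part (a), the upper bound $R([3,2],[k,1]) \le 2k-1$ follows by iterating
\[
R([3,2],[k,1]) \le R([2,2],[k,1]) + R([3,2],[k-1,1]) = 2 + R([3,2],[k-1,1])
\]
down to the base case $R([3,2],[3,1]) = R(K_3-e,K_3) = 5$, giving $2(k-3)+5 = 2k-1$. For the matching lower bound, color $K_{2k-2}$ with a perfect matching in red and the remaining edges in blue. No vertex meets two red edges, so there is no red $K_{[3,2]}$; and a blue $K_k$ would require an independent set of size $k$ in the perfect matching, but on $2k-2$ vertices such a matching has maximum independent set of size $k-1$. Hence $R([3,2],[k,1]) > 2k-2$.

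For parts (b)--(i), assume inductively the polynomial bound $p_{s-2}(k) \ge R([s-1,2],[k,1])$ of degree $s-2$ proved in the previous item. Iterating Lemma~\ref{recur} from the anchor at $k=8$ yields, for $k \ge 9$,
\[
R([s,2],[k,1]) \le \sum_{i=9}^{k} R([s-1,2],[i,1]) + R([s,2],[8,1]) \le \sum_{i=9}^{k} p_{s-2}(i) + v_{s,8},
\]
where the base values $v_{s,8} = R([s,2],[8,1])$ are the entries of row $K_8$ of Table~\ref{current1}: $v_{4,8}=42$, $v_{5,8}=123$, $v_{6,8}=300$, $v_{7,8}=657$, $v_{8,8}=1349$, $v_{9,8}=2558$, $v_{10,8}=4722$, $v_{11,8}=8200$. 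Evaluating the sum via Faulhaber's formulas for $\sum_{i=1}^{k} i^{j}$ produces a polynomial of degree $s-1$ in $k$.

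The only obstacle is computational: for each of the nine parts one must verify that the resulting polynomial agrees with the stated formula. It suffices to check (i) that the leading coefficient propagates correctly from $2/(s-2)!$ to $2/(s-1)!$ under summation (since $\sum_{i=1}^{k} i^{s-2} \sim k^{s-1}/(s-1)$), and (ii) that each claimed polynomial evaluated at $k=8$ returns $v_{s,8}$, so that the induction closes at the anchor. Both are routine polynomial algebra and can be confirmed by direct calculation.
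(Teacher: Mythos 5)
Your proposal is correct and takes essentially the same route as the paper: iterate Lemma~\ref{recur} to get $R([s,2],[k,1])\le\sum_{i=9}^{k}R([s-1,2],[i,1])+R([s,2],[8,1])$, anchor at the $k=8$ values from Table~\ref{current1}, and handle (a) as in Theorem~\ref{th2}(a) via iteration down to $R([3,2],[3,1])=5$ plus the perfect-matching lower bound. Two cosmetic slips only: the degree bookkeeping is off by one (the bound for $R([s,2],[k,1])$ has degree $s-2$, leading coefficient $2/(s-2)!$), and checking the leading coefficient together with the value at $k=8$ does not by itself pin down the polynomial --- one must verify the full identity $\sum_{i=9}^{k}p(i)+v_{s,8}$ equals the stated formula, which is the routine computation the paper likewise leaves implicit.
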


\begin{proof}
For (a), the proof is analogous than the proof of Theorem~\ref{th2}~(a). For the other items, we use the upper bounds $R([4,2],[8,1])\le 42$, $R([5,2],[8,1])\le 123$, $R([6,2],[8,1])\le 300$, $R([7,2],[8,1])\le 657$, $R([8,2],[8,1])\le 1349$, $R([9,2],[8,1])\le 2558$, $R([10,2],[8,1])\le 4722$ and $R([11,2],[8,1])\le 8200$ and the recurrence of Lemma \ref{recur} as follows
$$
R([i,2],[k,1]) \le \sum_{j=9}^{k}R([i-1,2],[j,1]) + R([i,2],[8,1]),
$$
for all integers $k\ge 9$ and for all $i\in\{4,5,\ldots,11\}$. For instance, for $i=4$, we obtain that
$$
\begin{array}{ll}
R([4,2],[k,1]) & \le \sum\limits_{i=9}^k R([3,2],[i,1]) + R([4,2],[8,1])\\
& \le \sum\limits_{i=9}^k (2i-1) + 42=k^2-22.
\end{array}
$$
for all integers $k\ge 9$. The proof for the other values of $i$ is analogous.
\end{proof}

Unfortunately, when $r\ge 3$ (similar as in the classical case) bounds obtained from Theorem \ref{mainth}
(resp. obtained from \eqref{eq2a}, in the classical case) are worse than the bounds obtained from the recursion given in Lemma \ref{recur}
(resp. from the recursion \eqref{eeqq}). 

\section{$\chi_r$-Colorings}\label{xi}

An $r$-edge coloring of $K_n$ is said to be a
{\em $\chi_r$-coloring}, if there exists a labeling of $V(K_n)$ with  $\{1,\dots ,n\}$ and
a function $\phi : \{1,\dots ,n\}\rightarrow  \{0,\dots ,r-1\}$
such that for all $1\le i< j\le n$ the edge $\{i,j\}$  has color $t$ if and only if $\phi(i)=t$.
\medskip

\begin{remark}\label{rem1}
(a) Notice that the value $\phi(n)$ do not play any role in the coloring.
 \smallskip
 
(b) A {\em monochromatic} edge coloring (all edges have the same color $0\le t\le r-1$ ) of $K_n$ is a $\chi_r$-coloring. Indeed, it is enough to take any vertex labeling and to set $\phi(i)=t$ for all $i$.
\smallskip

(c) There exist $r$-edge colorings of $K_n$ that are not $\chi_r$-coloring. For instance, it can be checked that for any labeling of $V(K_3)$ there is not a suitable function $\phi$ giving three different colors to the edges of $K_3$.
\end{remark}

\begin{example}\label{ex2} A 2-coloring of $K_3$ with two edges of the same color and the third one with different color is a  $\chi_2$-coloring. Indeed,  If the edges $\{1,2\}$ and $\{1,3\}$ are colored with color $0$ and the edge $\{2,3\}$ with color $1$ then we take $\phi(1)=0, \phi(2)=1$ and $\phi(3)=1$. 
\end{example}

Let $k\ge 1$ be an integer. Let $\chi_r(k)$ be the smallest integer $n$ such that for any $r$-edge-coloring of $K_N$, $N\ge n$ there exist a clique of order $k$ in which the induced $r$-edge coloring is a $\chi_r$-coloring.  

\begin{remark}\label{rem2}
$\chi_r(k)$ always exists. Indeed, by Ramsey's Theorem, for any $r$-edge coloring of $K_N$, $N\ge R_r(K_k)$ there exist 
a clique order $k$ that is monochromatic which, by Remark \ref{rem1} (b), is a $\chi_r$-coloring. 
\end{remark}

\subsection{$\chi_r$-colorings versus Erd\H os-Rado's colorings}
$\chi_r$-colorings can be considered as a generalization of the classical Ramsey's Theorem. We notice that this generalization is different from the one introduced by Erd\H os and Rado \cite{ER} in which they consider colorings by using an {\em arbitrarily} number of colors (instead of fixing the number of colors $r$) of $[n]\choose k$ according to certain canonical patterns, see also \cite{LR}. Indeed, in the case when $k=2$ the canonical patterns (the edge-colorings of the complete graph) considered by Erd\H os and Rado are those colorings that can be obtained as follows : there exists a (possibly empty) subset $I\subseteq \{1,2\}$  such that the edges $e,f\in {{[n]}\choose 2}$ have the same color if and only if $e_{I}=f_{I}$ where $\{x_1,x_2\}_I=\{x_i\in [n] \ | i\in I\}$. In this case we have the following 4 coloring patterns:
\smallskip

(a) If $I=\emptyset$ then two edges $e,f$ have the same color if and only if $e_{\{\emptyset\}}=\emptyset=f_{\{\emptyset\}}$, that is, all the edges have the same color.
\smallskip

(b) If $I=\{1\}$ then two edges $e,f$ have the same color if and only if $e_{\{1\}}=f_{\{1\}}$, that is,
the smallest element of $e$ is the same as the smallest element of $f$.
\smallskip

(c) If $I=\{2\}$ then two edges $e,f$ have the same color if and only if $e_{\{2\}}=f_{\{2\}}$, that is,
the largest element of $e$ is the same as the largest element of $f$.
\smallskip

(d) If $I=\{1,2\}$ then two edges $e$ and $f$ have the same color if and only if $e_{\{1,2\}}=e=f=f_{\{1,2\}}$, that is, all the edges have different colors. 
\smallskip

Contrary to $\chi_r$-colorings,  the number of colors for Erd\H os-Rado's colorings is not fixed.
So the existence of a Erd\H os-Rado's type coloring do not necessarily implies the existence of a $\chi_r$-coloring.
Nevertheless if the number of colors, say  $r$, is fixed then the patterns (a), (b) and (c) can essentially be considered as $\chi_r$-colorings (it is not the case for pattern (d)). 

\subsection{Values and bounds for $\chi_r(k)$}
We clearly have that $\chi_r(2)=2$.  For $\chi_r(3)$, we first notice that $\chi_r(3) = R_r\left( [3,2] \right)$ and that $K_{[3,2] }$ is a {\em star} $K_{1,2}$(a graph on three vertices, one of degree 2 and two of degree one).  Now, Burr and Roberts \cite{BR} proved that 
$$R\left( K_{1,q_1} , \ldots , K_{1,q_n} \right)=\sum\limits_{j=1}^{n} q_j-n+\epsilon$$
where $\epsilon =1$ if the number of even integers in the set $\{q_1,\dots ,q_n\}$ is even, $\epsilon=2$ otherwise.
Therefore, by applying the above formula when $q_i=2$ for all $i$, we obtain

\begin{equation}\label{r=3} 
\chi_r(3) = \left\{
\begin{array}{ll}
r+1 & \text{for}\ r\ \text{even},\\
r+2 & \text{for}\ r\ \text{odd}.
\end{array}\right.
\end{equation}

\begin{theorem}\label{rec1}
Let $r\ge 2$ be a positive integer and let $k_1,\ldots,k_r$ and $t_1,\ldots,t_r$ be positive integers such that $k_i\ge t_i$ for all $i\in\{1,\ldots,r\}$. Then, 
$$
R([k_1,t_1],\ldots,[k_r,t_r])\le \chi_r\left(\sum_{i=1}^{r}(k_i-t_i-1)+1+\max_{1\le i\le r}\{t_i\}\right).
$$
\end{theorem}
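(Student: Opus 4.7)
The plan is to unpack the definition of $\chi_r$ and locate the desired $K_{[k_{i^*},t_{i^*}]}$ combinatorially inside a $\chi_r$-coloured clique. First I would set $m=\max_{1\le i\le r}\{t_i\}$, $k=\sum_{i=1}^{r}(k_i-t_i-1)+1+m$, and take any $r$-edge coloring of $K_N$ with $N=\chi_r(k)$. By the definition of $\chi_r(k)$ there is a $k$-clique of $K_N$ whose induced coloring is a $\chi_r$-coloring; I would relabel its vertices $v_1,\ldots,v_k$ and let $\phi\colon\{1,\ldots,k\}\to\{1,\ldots,r\}$ be the associated function, so that the edge $\{v_i,v_j\}$ with $i<j$ receives color $\phi(i)$. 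The target is to exhibit, inside $\{v_1,\ldots,v_k\}$, a monochromatic $K_{[k_{i^*},t_{i^*}]}$ for some index $i^*$.

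Next I would partition $\{1,\ldots,k-1\}$ into the color classes $S_i=\phi^{-1}(i)\cap\{1,\ldots,k-1\}$ (by Remark~\ref{rem1}(a) the value $\phi(k)$ plays no role). The heart of the argument is a pigeonhole claim: some $i^*$ satisfies
$$
\bigl|S_{i^*}\cap\{1,\ldots,k-t_{i^*}\}\bigr|\ge k_{i^*}-t_{i^*}.
$$
I would prove this by contradiction: if the reverse inequality held for every $i$, then using $\{1,\ldots,k-m\}\subseteq\{1,\ldots,k-t_i\}$ and the fact that the sets $S_i\cap\{1,\ldots,k-m\}$ partition $\{1,\ldots,k-m\}$, summing the inequalities would give
$$
k-m=\sum_{i=1}^{r}\bigl|S_i\cap\{1,\ldots,k-m\}\bigr|\le\sum_{i=1}^{r}(k_i-t_i-1)=k-m-1,
$$
a contradiction. (The degenerate case where some $k_i=t_i$ reduces to the trivial bound $R\le t_i$ and can be handled separately as in Theorem~\ref{mainth}.)

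Given such an $i^*$, I would set $\ell=k_{i^*}-t_{i^*}$, pick the $\ell$ smallest elements $a_1<\cdots<a_\ell$ of $S_{i^*}\cap\{1,\ldots,k-t_{i^*}\}$, and note that $a_\ell\le k-t_{i^*}$ leaves at least $t_{i^*}$ indices strictly above $a_\ell$. Choosing any $t_{i^*}$ vertices $v_j$ with $j>a_\ell$ to form a set $B$ and putting $A=\{v_{a_1},\ldots,v_{a_\ell}\}\cup B$, I would have $|A|=k_{i^*}$, $|B|=t_{i^*}$, and every index in $A\setminus B$ strictly smaller than every index in $B$. Consequently, any edge of $A$ that meets $A\setminus B$ has its smaller-indexed endpoint inside $A\setminus B$, and that endpoint has $\phi$-value $i^*$, so the edge carries color $i^*$; edges inside $B$ carry arbitrary colors. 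Hence $A$ spans a monochromatic $K_{[k_{i^*},t_{i^*}]}$ of color $i^*$, with $B$ as the dropped clique.

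The step I expect to be most delicate is the pigeonhole claim: one has to guess that the correct truncation is at $k-t_i$ (so that enough ``high'' vertices remain to fill the dropped clique $B$), and then uniformly coarsen $t_i$ to $m$ so that the inequalities sum to the sharp numerical contradiction hidden in the choice of $k$. Once this is in place, the rest is structural bookkeeping about how a $\chi_r$-coloring propagates color forward along larger indices.
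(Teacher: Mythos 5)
Your proposal is correct and follows essentially the same route as the paper: pass to the $\chi_r$-colored $k$-clique, pigeonhole the $\phi$-values of the low-labelled vertices (of which there are $\sum_i(k_i-t_i-1)+1$) to extract $k_{i^*}-t_{i^*}$ vertices of a common $\phi$-colour, and adjoin $t_{i^*}$ higher-labelled vertices as the dropped clique, using the fact that a $\chi_r$-coloring propagates the colour of the smaller-indexed endpoint. Your version is, if anything, slightly more carefully bookkept than the paper's (which truncates at the top $\max_i t_i$ labels all at once), and your separate treatment of the degenerate case $k_i=t_i$ matches the paper's convention.
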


\begin{proof}  Consider a $\chi_r$-coloring of $\displaystyle K_{\chi_r\left(\sum\limits_{i=1}^{r}(k_i-t_i-1)+1+\max_{1\le i\le r}\{t_i\}\right)}$. 
Given the vertex labeling of the $\chi_r$-coloring, 
we consider the complete graph $K'$ induced by the vertices with labels $1,\dots ,\displaystyle\sum_{i=1}^{r}(k_i-t_i-1)+1$ (that is, we remove all the edges induced by the set of vertices $T_1$  with the $\max_{1\le i\le r}\{t_i\}$ largest labels). By the pigeonhole principle,  there is a set $T_2$ of at least $k_i-t_i+1-1$ vertices of $K'$ with the same color for some $i$. Moreover, by definition of $\chi_r$-coloring any edge $\{v_1,v_2\}$ with $v_1\in T_1$ and $v_2\in T_2$ has color $i$, giving the desired monochromatic $K_{[k_i,t_i]}$.
\end{proof}

The following result is an immediate consequence of Theorem \ref{rec1}.

\begin{corollary}\label{rec11} Let $r,k\ge 2$ be integers. Then, 
$$R_r([k,1])\le\chi_r(r(k-2)+2) \text{ and } R_r([k,2])\le\chi_r(r(k-3)+3).$$
\end{corollary}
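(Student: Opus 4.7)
The plan is to apply Theorem \ref{rec1} twice, each time with a specific uniform choice of parameters, and observe that the resulting bound simplifies to the desired expression. Since the corollary concerns the symmetric Ramsey numbers $R_r([k,t])$, all the arguments $(k_i,t_i)$ will be equal, which means the sum $\sum_{i=1}^{r}(k_i-t_i-1)$ collapses to $r(k-t-1)$ and $\max_{1\le i\le r}\{t_i\}$ collapses to $t$.

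Concretely, for the first inequality I would take $k_i=k$ and $t_i=1$ for every $i\in\{1,\ldots,r\}$. Then Theorem \ref{rec1} gives
$$
R_r([k,1]) \le \chi_r\!\left(\sum_{i=1}^{r}(k-1-1)+1+1\right) = \chi_r\!\bigl(r(k-2)+2\bigr).
$$
For the second inequality I would take $k_i=k$ and $t_i=2$ for every $i$; the same theorem yields
$$
R_r([k,2]) \le \chi_r\!\left(\sum_{i=1}^{r}(k-2-1)+1+2\right) = \chi_r\!\bigl(r(k-3)+3\bigr).
$$
There is essentially no obstacle: the result is a direct specialization, with the only bookkeeping being to verify that $k\ge 2$ (respectively $k\ge 2$, which ensures $k\ge t$ in both cases) so that Theorem \ref{rec1} applies.
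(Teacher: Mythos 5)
Your proposal is correct and matches the paper exactly: the corollary is stated there as an immediate consequence of Theorem \ref{rec1}, obtained precisely by setting all $k_i=k$ and all $t_i=t$ (with $t=1$ or $t=2$) and simplifying the argument of $\chi_r$. The arithmetic $r(k-t-1)+1+t$ checks out in both cases.
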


\begin{proposition}\label{recurrencia} Let $r,k\ge 2$ be integers. Then, 
$$\chi_r(k)\le r\chi_r(k-1)-r+2.$$
\end{proposition}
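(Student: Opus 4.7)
The plan is to mimic the standard neighborhood-pigeonhole argument that yields the classical Ramsey recursion $r(k)\le 2\,r(k-1)$, but to exploit the positional structure of a $\chi_r$-coloring to glue one extra vertex onto a smaller $\chi_r$-colored clique.

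Let $N = r\chi_r(k-1) - r + 2$ and take any $r$-edge-coloring of $K_N$. Pick an arbitrary vertex $v$. Then $v$ has degree $N-1 = r(\chi_r(k-1)-1)+1$, so by the pigeonhole principle there is a color $t\in\{0,\dots,r-1\}$ and a set $S$ of at least $\chi_r(k-1)$ neighbors of $v$ such that every edge between $v$ and $S$ is colored $t$.

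Now apply the definition of $\chi_r(k-1)$ to the induced $r$-edge-coloring of $K_S$: there is a subset $S'\subseteq S$ with $|S'|=k-1$ whose induced coloring is a $\chi_r$-coloring. Let $u_1,\dots,u_{k-1}$ be the corresponding labeling of $S'$ and let $\phi:\{1,\dots,k-1\}\to\{0,\dots,r-1\}$ be the associated function, so that for $1\le i<j\le k-1$ the edge $\{u_i,u_j\}$ has color $\phi(i)$.

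Finally, extend this structure to $S'\cup\{v\}$ by placing $v$ first in the ordering: relabel the $k$ vertices as $v_1=v,\ v_2=u_1,\ \dots,\ v_k=u_{k-1}$ and define $\phi':\{1,\dots,k\}\to\{0,\dots,r-1\}$ by $\phi'(1)=t$ and $\phi'(i+1)=\phi(i)$ for $i=1,\dots,k-1$. For $1<j\le k$ the edge $\{v_1,v_j\}=\{v,u_{j-1}\}$ has color $t=\phi'(1)$ by the pigeonhole step, and for $1<i<j\le k$ the edge $\{v_i,v_j\}=\{u_{i-1},u_{j-1}\}$ has color $\phi(i-1)=\phi'(i)$ by the inductive $\chi_r$-coloring on $S'$. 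Hence the induced coloring on this $k$-clique is a $\chi_r$-coloring, proving $\chi_r(k)\le N = r\chi_r(k-1)-r+2$.

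The only point that requires care is the pigeonhole count: one needs exactly $r(\chi_r(k-1)-1)+1$ incident edges to force $\chi_r(k-1)$ of them to share a color, which is where the additive term $-r+2$ comes from. Everything else is a direct verification that the $\chi_r$-coloring pattern is preserved when $v$ is prepended as the smallest labeled vertex, which works precisely because the color of an edge in a $\chi_r$-coloring depends only on its smaller endpoint.
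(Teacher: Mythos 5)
Your proof is correct and follows essentially the same route as the paper: pigeonhole on the $r(\chi_r(k-1)-1)+1$ edges at a vertex to get a monochromatic star on $\chi_r(k-1)$ neighbors, extract a $\chi_r$-colored $(k-1)$-clique inside, and prepend the vertex as the smallest label. Your write-up is in fact slightly more careful than the paper's, which sets $\phi'(1)=1$ where it should be the star's color $t$ as you do.
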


\begin{proof}
Consider a $r$-edge coloring of $K_{r\chi_r(k-1)-r+2}$ and let $u$ be a vertex. Since $d(u)=r\chi_r(k-1)-r+1$ then there are at least $\left\lceil\frac{r\chi_r(k-1)-r+1}{r}\right\rceil=\chi_r(k-1)$ set of edges with the same color all incident to $u$. Now, by definition of $\chi_r(k-1)$, there is a clique $H$ of order $k-1$ which edge coloring is a $\chi_r$- coloring. So, there is a labeling $\pi$ of $V(H)$, $|V(H)|=k$ and a function $\phi$ giving such coloring. We claim that the $r$-edge coloring of the clique $H'=H\cup u$ is a $\chi_r$-coloring. Indeed, by taking the label $\pi'(i)=\pi(i)+1$ for all vertex $i\neq u$ and $\pi'(u)=1$ and the function $\phi'(1)=1$ and $\phi'(i)=\phi(i-1)$ for each $i=2,\dots ,k$. 
\end{proof}

\begin{proposition}\label{upboundchi}
Let $r,k\ge 2$ be integers. Then, 
$$
\chi_r(k)\le g(k,r)=\left\{\begin{array}{ll}
r^{k-2} + r^{k-3} + \cdots + r^{2} + r +2 = \displaystyle\frac{r^{k-1}-1}{r-1}+1 & \text{for}\ r\ \text{odd},\\
r^{k-2} + r^{k-4} + r^{k-5} + \cdots + r^{2} + r + 2 = \displaystyle\frac{r^{k-3}-1}{r-1} + r^{k-2} + 1 & \text{for}\ r\ \text{even}.
\end{array}\right.
$$
\end{proposition}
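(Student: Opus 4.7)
The approach is a direct induction on $k$, applying the recurrence $\chi_r(k)\le r\chi_r(k-1)-r+2$ from Proposition~\ref{recurrencia} and verifying that the candidate bound $g(k,r)$ is preserved under this recurrence, with the two parity cases handled separately because they have slightly different closed forms.

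For the base cases, I would use the values already established. When $r$ is odd, the smallest relevant $k$ is $k=2$: one checks that $g(2,r) = \frac{r-1}{r-1}+1 = 2 = \chi_r(2)$; as a safety net, the identity \eqref{r=3} gives $\chi_r(3)=r+2$ matching $g(3,r)=\frac{r^2-1}{r-1}+1 = r+2$. When $r$ is even, since the closed form is awkward at $k=2$, I would take $k=3$ as the base: \eqref{r=3} gives $\chi_r(3)=r+1$, matching $g(3,r)=\frac{r^0-1}{r-1}+r+1 = r+1$.

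For the inductive step, I would simply compute $r\,g(k-1,r)-r+2$ and show that it equals $g(k,r)$. In the odd case, writing $g(k-1,r)=\frac{r^{k-2}-1}{r-1}+1$, one gets
$$
r\,g(k-1,r)-r+2 \;=\; \frac{r^{k-1}-r}{r-1}+r-r+2 \;=\; \frac{r^{k-1}+r-2}{r-1} \;=\; g(k,r).
$$
In the even case, writing $g(k-1,r)=\frac{r^{k-4}-1}{r-1}+r^{k-3}+1$, one similarly obtains
$$
r\,g(k-1,r)-r+2 \;=\; \frac{r^{k-3}-r}{r-1}+r^{k-2}+2 \;=\; \frac{r^{k-3}-1}{r-1}+r^{k-2}+1 \;=\; g(k,r),
$$
using $\frac{r^{k-3}-r}{r-1}+2 = \frac{r^{k-3}-1}{r-1}+1$, i.e., adding $\frac{r-1}{r-1}-1 = 0$. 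Combining this equality with the induction hypothesis $\chi_r(k-1)\le g(k-1,r)$ and Proposition~\ref{recurrencia} then yields $\chi_r(k)\le g(k,r)$, closing the induction.

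There is essentially no obstacle here beyond careful bookkeeping with the closed-form expressions; the only thing that requires attention is choosing the correct base case for each parity, since the even formula involves $r^{k-3}$ and degenerates for $k<3$. Everything else is algebraic verification that $g(k,r)$ satisfies the same recurrence $u_k = r u_{k-1}-r+2$ that bounds $\chi_r(k)$.
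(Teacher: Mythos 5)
Your proof is correct and is essentially the paper's own argument: the paper's one-line proof ("by equality \eqref{r=3} and successive applications of Proposition~\ref{recurrencia}") is exactly the induction you carry out, with the base case supplied by the formula for $\chi_r(3)$ and the step supplied by the recurrence $\chi_r(k)\le r\chi_r(k-1)-r+2$. Your algebraic verification that $g(k,r)=r\,g(k-1,r)-r+2$ in both parity cases checks out, and your remark about the even-$r$ formula degenerating below $k=3$ correctly identifies the base case the paper implicitly uses.
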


\begin{proof}
By equality \eqref{r=3} and by successive applications of Proposition~\ref{recurrencia}.
\end{proof}

\begin{theorem}\label{mainth2}
Let $r\ge 2$ be a positive integer and let $k_1,\ldots,k_r$ and $t_1,\ldots,t_r$ be positive integers such that $k_i\ge t_i$ for all $i\in\{1,\ldots,r\}$. Then, 
$$
R([k_1,t_1],\ldots,[k_r,t_r])\le g(k,r)
$$
where
$$
k:=\sum_{i=1}^{r}(k_i-t_i-1)+1+\max_{1\le i\le r}\{t_i\}.
$$
\end{theorem}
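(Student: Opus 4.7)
The plan is to recognize that Theorem \ref{mainth2} follows by chaining together two results already established in this section: the connection between the Ramsey-type number $R([k_1,t_1],\ldots,[k_r,t_r])$ and the $\chi_r$-coloring Ramsey number $\chi_r(k)$, and a closed-form upper bound on $\chi_r(k)$ itself. So my proof will be essentially a one-line composition, with the only real content being to check that the quantities line up correctly.

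First, I would invoke Theorem \ref{rec1} with the given data $k_1,\ldots,k_r$ and $t_1,\ldots,t_r$. That result asserts
$$
R([k_1,t_1],\ldots,[k_r,t_r]) \le \chi_r\!\left(\sum_{i=1}^{r}(k_i-t_i-1)+1+\max_{1\le i\le r}\{t_i\}\right) = \chi_r(k),
$$
where $k$ is exactly the quantity defined in the statement of Theorem \ref{mainth2}. Next, I would apply Proposition \ref{upboundchi} to this $k$, which gives $\chi_r(k)\le g(k,r)$, with $g(k,r)$ the explicit two-case expression in $r$ (one for $r$ odd, one for $r$ even). Composing the two inequalities yields
$$
R([k_1,t_1],\ldots,[k_r,t_r]) \le \chi_r(k) \le g(k,r),
$$
which is the desired bound.

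Since both ingredients are already proved earlier in the paper, there is no substantive obstacle. The only small point worth flagging is that Proposition \ref{upboundchi} is stated for $k\ge 2$, so I would briefly verify that the parameter $k=\sum_{i=1}^{r}(k_i-t_i-1)+1+\max_i t_i$ meets this condition under the hypothesis $k_i\ge t_i$: each summand $k_i-t_i-1\ge -1$, and the $+1+\max_i t_i$ term (with $\max_i t_i\ge 1$) compensates, ensuring $k\ge 2$. With this sanity check, the proof reduces to the two-step chaining described above.
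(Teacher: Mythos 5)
Your proof is exactly the paper's: the authors prove Theorem \ref{mainth2} by citing Theorem \ref{rec1} and Proposition \ref{upboundchi}, precisely the two-step chaining you describe. Your added sanity check is not quite right as stated (if every $k_i=t_i$ and $\max_i t_i=1$ then $k=2-r<2$), but such degenerate cases are handled separately by the convention $R(\ldots)\le t_i$ and the paper glosses over this point just as you do.
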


\begin{proof}
By Theorem~\ref{rec1} and Proposition~\ref{upboundchi}.
\end{proof}

We believe that the above upper bound for $R_r([k,1])$ is smaller than the one given by Corollary \ref{cor1a} (see equation \eqref{eq2}) for some values of $k$.

\begin{conjecture}\label{conj1} Let $r\ge 3$ be an integer. Then, for all $3\le k\le r^{3/2}+r-1$ 
$$g((r(k-2)+2,r)<\binom{r(k-1)}{k-1,k-1,\ldots\ldots,k-1}= \frac{(rk-r)!}{((k-1)!)^r}\cdot$$
\end{conjecture}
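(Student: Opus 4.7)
The plan is to derive a closed-form upper bound on $g(r(k-2)+2,r)$ and a Stirling-type lower bound on the multinomial coefficient $\binom{r(k-1)}{k-1,\ldots,k-1}$, reduce the conjecture to a single scalar inequality in $r$ and $k$, and then verify that inequality in the claimed range $3\le k\le r^{3/2}+r-1$.

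Setting $N:=r(k-2)+2$, the two parity cases of Proposition~\ref{upboundchi} share the feature that the leading term of $g(N,r)$ is $r^{N-2}=r^{r(k-2)}$ and the remaining terms form an (almost) geometric progression of ratio $1/r$. A uniform comparison with the full geometric sum $\sum_{i\ge 0}r^{N-2-i}$ yields
$$
g(r(k-2)+2,\,r)\;\le\;\frac{r^{r(k-2)+1}}{r-1}\qquad\text{for all }r\ge 2.
$$
On the right-hand side, applying the Stirling double inequality $\sqrt{2\pi n}(n/e)^n\le n!\le\sqrt{2\pi n}(n/e)^n e^{1/(12n)}$ to the numerator $(r(k-1))!$ and to each factor of $((k-1)!)^r$ gives
$$
\binom{r(k-1)}{k-1,\ldots,k-1}\;\ge\;\frac{r^{r(k-1)+1/2}}{(2\pi(k-1))^{(r-1)/2}}\,\exp\!\left(-\frac{r}{12(k-1)}\right).
$$
Dividing these two estimates and taking logarithms reduces the conjecture to the scalar inequality
$$
\frac{r-1}{2}\log\bigl(2\pi(k-1)\bigr)+\frac{r}{12(k-1)}\;<\;\log(r-1)+\left(r-\tfrac{1}{2}\right)\log r.
$$

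Under the hypothesis $k-1\le r^{3/2}+r-2$, the left-hand side of this reduced inequality is at most $\tfrac{3(r-1)}{4}\log r+O(r)$ as $r\to\infty$, whereas the right-hand side is $r\log r+O(\log r)$; the leading-order gap of order $\tfrac{r\log r}{4}$ is ample, so the inequality holds for every $r$ beyond some explicit threshold $r_0$. For the finitely many remaining pairs $(r,k)$ with $3\le r\le r_0$ and $3\le k\le\lfloor r^{3/2}+r-1\rfloor$, the conjecture is then verified by direct numerical evaluation of $g(r(k-2)+2,r)$ and of $\binom{r(k-1)}{k-1,\ldots,k-1}$.

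The principal obstacle is the regime where $r$ is small and $k$ lies near the upper boundary of the admissible range: the ratio of the two sides of the original inequality can be quite close to $1$ there, and the crude Stirling estimates used above may not be sharp enough. Overcoming this will probably require either a two-term Stirling expansion to tighten the exponential factor, or a more refined combinatorial lower bound on the multinomial obtained by peeling off one binomial factor at a time. A preliminary numerical check for the first few values of $r$ may additionally reveal whether the stated bound $r^{3/2}+r-1$ needs to be adjusted by a small additive or multiplicative constant, in which case the proof would yield a slightly weaker but fully explicit range of $k$ for which the conjecture holds.
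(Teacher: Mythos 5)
This statement is a \emph{conjecture} in the paper: the authors give no proof, only a computer verification for $3\le r\le 150$ and some plots, so there is no argument of theirs to measure yours against. Your reduction is essentially correct as far as it goes: bounding $g(r(k-2)+2,r)$ above by (roughly) $r^{r(k-2)+1}/(r-1)$ and the multinomial below by Stirling does reduce the claim to the scalar inequality you display. But what follows is a plan rather than a proof: the threshold $r_0$ is never made explicit, the finite verification below $r_0$ is never carried out (and for $r_0$ in the range your crude estimates force, this is a check over thousands of pairs involving integers with tens of thousands of digits), and your closing paragraph concedes that the range $k\le r^{3/2}+r-1$ may need adjusting --- i.e.\ that you may end up proving a different statement.

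The concrete failure point is exactly the regime you flag. Take $r=5$ and $k=13$, which is in range since $5^{3/2}+5-1\approx 15.18$. Your reduced inequality becomes $2\ln(24\pi)+\tfrac{5}{144}\approx 8.68 < \ln 4+\tfrac{9}{2}\ln 5\approx 8.63$, which is false; and this is not an artefact of loose Stirling constants, because the target inequality itself fails there: $g(57,5)=\tfrac{5^{56}-1}{4}+1\approx 3.47\times 10^{38}$, while $\binom{60}{12,12,12,12,12}=60!/(12!)^5\approx 3.30\times 10^{38}$, and the gap widens for $k=14,15$. Comparing leading terms, the true crossover is at $2\pi(k-1)\approx\bigl((r-1)r^{r-1/2}\bigr)^{2/(r-1)}\sim r^2\,$, which lies \emph{below} $r^{3/2}+r-1$ for small $r$ (roughly $r\lesssim 30$) and above it only for larger $r$. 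So no refinement of Stirling's formula can rescue the argument on the stated range: at best your method proves the inequality for all $r$ beyond an explicit threshold together with a corrected (smaller) range of $k$ for small $r$, and it simultaneously suggests the conjecture as printed is false near the upper boundary for small $r$, the authors' reported computations notwithstanding.
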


We have checked the validity of the above conjecture for all $3\le r\le 150$ by computer calculations. 
Conjecture \ref{conj1} is also supported graphically, by considering the continual behaviour of 

$$f(k,r)= g((r(k-2)+2,r)- \frac{(rk-r)!}{((k-1)!)^r}\cdot$$

To see that, we may use the fact that  $\Gamma(z+1)=z!$ when $z$ is a nonnegative integer, obtaining 

$$f(k,r)= g((r(k-2)+2,r)- \frac{\Gamma(r(k-1)+1)}{\Gamma^r(k)}$$

where $\Gamma(z) $ is the well-known {\em gamma} function\footnote{The gamma function is defined as $\Gamma(z)=\int_0^{+\infty}t^{z-1}e^{-t} dt$ for any $z\in\mathbb{C}$ with $Re(z)>0$. Moreover,  $\Gamma(z+1)=z!$ when $z$ is a nonnegative integer.}, see Figure \ref{fig1}.
  
\begin{figure}[htb] 
\includegraphics[width=.42\textwidth]{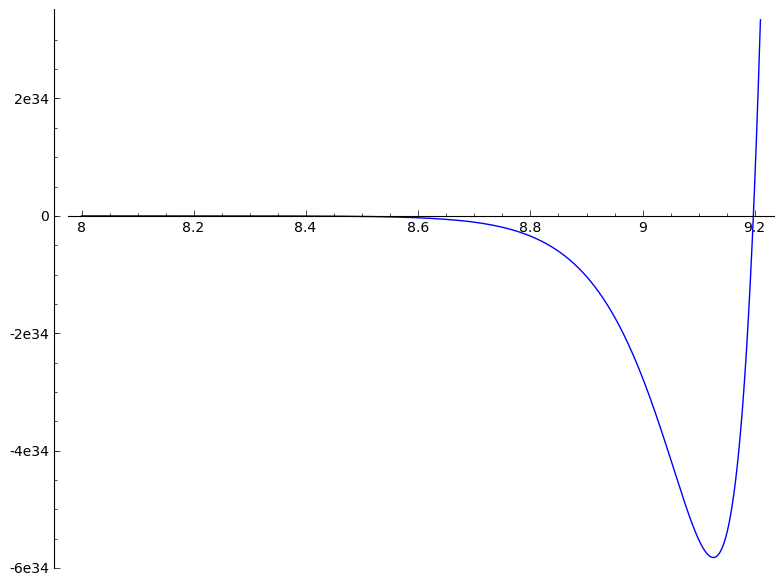}\ \ 
\includegraphics[width=.42\textwidth]{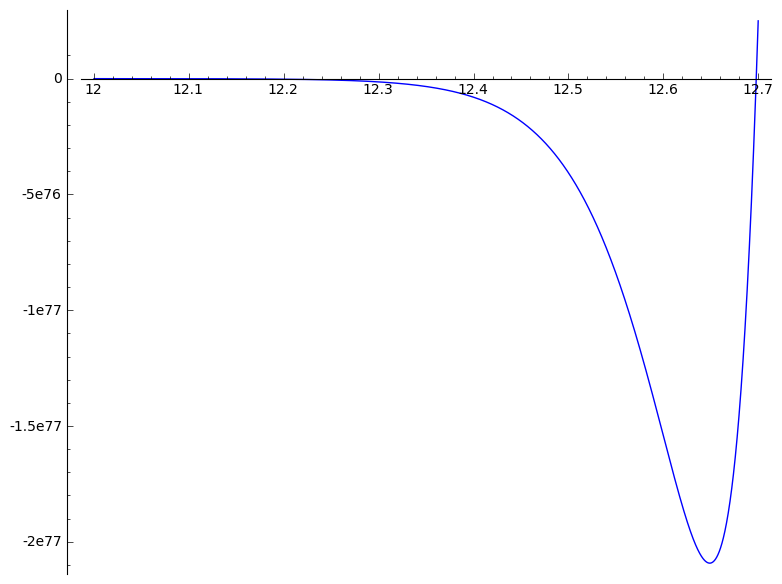}
\caption{Behaviours of $f(4,k)$ with $8\le k<10$ (left) and $f(5,k)$ with $12\le k<13$ (right). We notice that due to the scaling used in the figures (in order to plot the minimum) the function $f$ seems very close to zero but in fact it is very far apart, $f(4,8)\le -1,8\times 10^{29}$ for the left one and $f(5,12)\le -5,7\times 10^{72}$ for the right one.}
\label{fig1}\end{figure}

We have also checked (by computer) that for each  $3\le r\le 150$ there is an interval $I_r$ (increasing as $r$ is growing) such that for each $k\ge 3, k\in I_r$ the function  $g(r(k-3)+3,r)$ (resp. $g(r(k-4)+4,r)$) is a smaller upper bound for $R_r([k,2])$ (resp. for $R_r([k,3])$) than the  corresponding ones obtained from Corollary \ref{cor1a}. In view of the latter, we pose the following 

\begin{question} Let $t\ge 1$ and $r\ge 3$ be integers. Is there a function $c(r)$ such that for all $3\le k\le c(r)$ 
$$g(r(k-t)+t,r)<t \binom{r(k-t)}{k-t,k-t,\ldots\ldots,k-t} \ ?$$
\end{question}

\end{document}